\newtheorem{thm}{Theorem}[section]  % use one numbering scheme for all these objects
\newtheorem{lem}[thm]{Lemma}
\newtheorem{corr}[thm]{Corollary}
\newtheorem{asmp}[thm]{Assumption}
\newcommand{\norm}[1]{\left|\left|#1\right|\right|}
\newcommand{\nn}{\nonumber}
\newcommand{\Real}{{\mathbb{R}}}
\newcommand{\Integer}{{\mathbb{Z}}}
\newcommand{\order}{\mathcal{O}}
\newcommand{\torder}{\widetilde{\mathcal{O}}}
\DeclareMathOperator*{\argmin}{arg\,min}
\newcommand{\prox}{\text{prox}}
\newcommand{\sj}{\sum_{j=(k-K)_+}^{k-1}}
\newcommand{\ndk}{\norm{d_k}}
\newcommand{\ndj}{\norm{d_j}}
\title{A Stronger Convergence Result on the Proximal Incremental Aggregated Gradient Method}
\author{
N.~D.~Vanli\thanks{Laboratory for Information and Decision Systems, Massachusetts Institute of Technology, Cambridge, MA 02139, USA. email: \{denizcan, mertg, asuman\}@mit.edu.}
\and M.~G\"urb\"uzbalaban\footnotemark[2]
\and A.~Ozdaglar\footnotemark[2]
}
\date{\today}
\begin{document}

\maketitle

\begin{abstract}
We study the convergence rate of the proximal incremental aggregated gradient (PIAG) method for minimizing the sum of a large number of smooth component functions (where the sum is strongly convex) and a non-smooth convex function. At each iteration, the PIAG method moves along an aggregated gradient formed by incrementally updating gradients of component functions at least once in the last $K$ iterations and takes a proximal step with respect to the non-smooth function. We show that the PIAG algorithm attains an iteration complexity that grows linear in the condition number of the problem and the delay parameter $K$. This improves upon the previously best known global linear convergence rate of the PIAG algorithm in the literature which has a quadratic dependence on $K$.
\end{abstract}

\section{Introduction}
We consider composite additive cost optimization problems, where the objective function is given by the sum of $m$ component functions $f_i(x)$ and a possibly non-smooth regularization function $r(x)$:
\begin{equation}\label{eq:goal}
  \min_{x\in\Real^n} F(x) \triangleq f(x) + r(x),
\end{equation}
where $f(x) = \frac{1}{m} \sum_{i=1}^m f_i(x)$. Each component function $f_i:\Real^n\to(-\infty,\infty)$ is assumed to be convex and continuously differentiable while the regularization function $r:\Real^n\to(-\infty,\infty]$ is proper, closed, and convex but not necessarily differentiable.

The recent paper \cite{dc} studied the PIAG algorithm, which at each iteration $k\geq0$, first constructs an \emph{aggregated gradient} defined by
\begin{equation}
  g_k \triangleq \frac{1}{m} \sum_{i=1}^m \nabla f_i(x_{\tau_{i,k}}), \nn
\end{equation}
where $\nabla f_i(x_{\tau_{i,k}})$ represents the gradient of the $i$th component function sampled at time $\tau_{i,k}$. This aggregated gradient is used to update $x_k$ as
\begin{equation}
  x_{k+1} = \prox_r^\eta(x_k - \eta g_k), \label{eq:update_rule}
\end{equation}
where the proximal mapping is defined as $\prox_r^\eta(y) = \argmin_{x\in\Real^n} \left\{ \frac{1}{2} \norm{x-y}^2 + \eta r(x) \right\}$ with a constant step size $\eta>0$.

It was shown in \cite{dc} that the PIAG algorithm attains a global linear convergence rate of $1-\torder(Q^{-1}K^{-2})$ in function suboptimality, i.e., $F(x_k)-F(x^*)$, where $x^*$ denotes the optimal solution to \eqref{eq:goal} and the tilde is used to hide the logarithmic terms in $Q$ and $K$. This result implies that in order to achieve an $\epsilon$-optimal solution, PIAG requires at most $\torder(QK^2\log(1/\epsilon))$ iterations. The independent work \cite{arda} also studied the PIAG algorithm and showed using a different analysis that it attains a global linear convergence rate of $1-\order(Q^{-1}K^{-2})$ in distance to the optimal solution $\norm{x_k-x^*}$. This result implies that to achieve a point in the $\epsilon$-neighborhood of the optimal solution, PIAG requires $\order(QK^2\log(1/\epsilon))$ iterations. The latter result on distances does not translate directly into a linear convergence rate in function suboptimality since the problem \eqref{eq:goal} is not smooth.

In this paper, by using the results presented in \cite{arda} and \cite{dc}, we provide a stronger linear convergence rate for the deterministic PIAG algorithm. In particular, in \cite{dc}, two lemmas regarding the relations on the proximal operator are introduced to provide a contraction relation on the function suboptimality. In \cite{arda}, a lemma that characterizes the linear convergence of a Lyapunov function is introduced, where the Lyapunov function satisfies a certain contraction relation with sufficiently small perturbation that depends on the recent history. By using these results, we prove that the PIAG algorithm attains a global linear convergence rate of $1-\order(Q^{-1}K^{-1})$ in function suboptimality. This implies that in order to achieve an $\epsilon$-optimal solution in suboptimality of the function values, PIAG requires at most $\order(QK\log(1/\epsilon))$ iterations. To our knowledge, this convergence rate result provides the best dependence on the condition number of the problem $Q$ and the delay parameter $K$ for deterministic incremental aggregated gradient methods.

\section{Assumptions}
Throughout the paper, we make the following standard assumptions that are used in both\cite{arda} and \cite{dc}.

\begin{asmp}\label{asmp:lips}\textbf{(Lipschitz gradients)}
Each $f_i$ has Lipschitz continuous gradients on $\Real^n$ with some constant $L_i \geq 0$, i.e.,
\begin{equation}
  \norm{\nabla f_i(x)-\nabla f_i(y)} \leq L_i \norm{x-y}, \nn
\end{equation}
for any $x,y\in\Real^n$.\footnote{If a function $f$ has Lipschitz continuous gradients with some constant $L$, then $f$ is called $L$-smooth. We use these terms interchangeably.}
\end{asmp}

Defining $L \triangleq \frac{1}{m} \sum_{i=1}^m L_i$, we observe that Assumption \ref{asmp:lips} and the triangle inequality yield
\begin{equation}
  \norm{\nabla f(x)-\nabla f(y)} \leq L \norm{x-y}, \nn
\end{equation}
for any $x,y\in\Real^n$, i.e., the function $f$ is $L$-smooth.

\begin{asmp}\label{asmp:conv}\textbf{(Strong Convexity)}
The sum function $f$ is $\mu$-strongly convex on $\Real^n$ for some $\mu>0$, i.e., the function $x \mapsto f(x) - \frac{\mu}{2} \norm{x}^2$ is convex.
\end{asmp}

\begin{asmp}\label{asmp:subdif}\textbf{(Subdifferentiability)}
The regularization function $r:\Real^n\to(-\infty,\infty]$ is proper, closed, convex and subdifferentiable everywhere in its effective domain, i.e., $\partial r(x) \neq \emptyset$ for all $x\in\{y\in\Real^n \, : \, r(y) < \infty \}$.
\end{asmp}

A consequence of Assumptions \ref{asmp:conv} and \ref{asmp:subdif} is that $F$ is strongly convex, hence there exists a unique optimal solution of problem \eqref{eq:goal}, which we denote by $x^*$ (cf. Lemma 6 in \cite{primaldual}).

Another consequence of Assumption \ref{asmp:subdif} is that the set of subgradients of $x_k$ is well-defined for all $k\geq0$. Then, it follows from the optimality conditions \cite{Bertsekas15Book} of the minimization problem in the proximal map in \eqref{eq:update_rule} that $0 \in \partial \phi(x_{k+1})$. This yields $x_{k+1}-(x_k - \eta g_k)+\eta h_{k+1}=0$, for some subgradient $h_{k+1} \in \partial r(x_{k+1})$. Thus, we can represent the update rule of the PIAG algorithm as follows
\begin{equation}
  x_{k+1} = x_k + \eta d_k, \nn
\end{equation}
where $d_k \triangleq -g_k-h_{k+1}$ is the direction of the update at time $k$.

\begin{asmp}\label{asmp:bdd_delay}\textbf{(Bounded Delay)}
Each component function is sampled at least once in the past $K\geq0$ iterations, i.e., there exists a finite integer $K$ such that $k-K \leq \tau_{i,k} \leq k$, for all $k\geq1$ and $i\in\{1,\dots,m\}$.
\end{asmp}

\section{Main Result}
In this section, we characterize the global linear convergence rate of the PIAG algorithm. Let
\begin{equation}\label{eq:lyapunov}
  F_k \triangleq F(x_k) - F(x^*)
\end{equation}
denote the suboptimality in the objective value at iteration $k$. The paper \cite{dc} presented two lemmas regarding the evolution of $F_k$ and $\norm{d_k}^2$. In particular, the first lemma investigates how the suboptimality in the objective value evolves over the iterations and the second lemma relates the direction of update to the suboptimality in the objective value at a given iteration $k$.

\begin{lem}\cite[Lemma 3.3]{dc}\label{prop1}
Suppose that Assumptions \ref{asmp:lips}-\ref{asmp:bdd_delay} hold. Then, the PIAG algorithm yields the following guarantee
\begin{equation}
  F_{k+1} \leq F_k - \frac{1}{2} \eta \norm{d_k}^2 + \eta^2 \frac{L}{2} \sj\ndj^2, \label{eq:prop1}
\end{equation}
for any step size $0<\eta\leq\frac{1}{L(K+1)}$.
\end{lem}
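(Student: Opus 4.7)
My approach is a standard proximal-gradient descent analysis, suitably perturbed to account for the delayed aggregated gradient. The key ingredients are the $L$-smoothness of $f$, the first-order optimality condition embedded in the proximal step (encoded in the identity $d_k = -g_k - h_{k+1}$), and a telescoping bound on $\norm{\nfk - g_k}$ that follows from Assumption \ref{asmp:bdd_delay}.

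First, I would combine the descent lemma for $f$ with the subgradient inequality for $r$ at $x_{k+1}$: the former yields $f(x_{k+1}) \leq f(x_k) + \eta\la\nfk,d_k\ra + \frac{\eta^2 L}{2}\ndk^2$, while convexity of $r$ together with $h_{k+1}\in\partial r(x_{k+1})$ gives $r(x_{k+1}) \leq r(x_k) + \eta\la h_{k+1},d_k\ra$. Adding these two bounds and rewriting $\nfk + h_{k+1} = \nfk - g_k - d_k$ produces
\begin{equation}
  F_{k+1} \leq F_k + \eta\la\nfk - g_k, d_k\ra - \eta\ndk^2 + \frac{\eta^2 L}{2}\ndk^2, \nn
\end{equation}
which already isolates a clean $-\eta\ndk^2$ descent and leaves the cross term as the only contribution depending on the past iterates.

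Next, I would bound that cross term. Using Assumption \ref{asmp:lips} together with the telescoping identity $x_k - x_{\tau_{i,k}} = \eta\sum_{j=\tau_{i,k}}^{k-1} d_j$ and the delay bound $k-K \leq \tau_{i,k}$ from Assumption \ref{asmp:bdd_delay}, I obtain
\begin{equation}
  \norm{\nfk - g_k} \leq \frac{1}{m}\sum_{i=1}^{m} L_i \norm{x_k - x_{\tau_{i,k}}} \leq L\eta \sj \ndj. \nn
\end{equation}
Applying Cauchy-Schwarz to the inner product, then splitting each product via $\ndk\ndj \leq \frac{1}{2}(\ndk^2 + \ndj^2)$ and noting that the summation range contains at most $K$ indices, I would deduce
\begin{equation}
  \eta\la\nfk - g_k, d_k\ra \leq \frac{\eta^2 L K}{2}\ndk^2 + \frac{\eta^2 L}{2}\sj \ndj^2. \nn
\end{equation}

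Finally, substituting this back gathers all $\ndk^2$ contributions into a single coefficient $\frac{\eta^2 L(K+1)}{2}-\eta$, and the stepsize restriction $\eta \leq \frac{1}{L(K+1)}$ is exactly what is needed to make this coefficient at most $-\frac{\eta}{2}$, delivering the claim. There is no real obstacle beyond careful bookkeeping: the one modest subtlety is choosing an AM-GM split that routes the cross term's $\ndk^2$ mass into the leading descent rather than into the history sum, since that is precisely what permits the factor $K+1$ (rather than something quadratic in $K$) to govern the stepsize constraint.
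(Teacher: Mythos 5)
Your derivation is correct: the descent lemma for $f$ plus the subgradient inequality for $r$ at $x_{k+1}$, the substitution $\nabla f(x_k)+h_{k+1}=\nabla f(x_k)-g_k-d_k$, the Lipschitz-plus-telescoping bound $\norm{\nabla f(x_k)-g_k}\leq L\eta\sum_{j=(k-K)_+}^{k-1}\norm{d_j}$, and the symmetric Cauchy--Schwarz/AM--GM split all assemble exactly into a $\norm{d_k}^2$-coefficient of $\frac{\eta^2 L(K+1)}{2}-\eta$, which the constraint $\eta\leq\frac{1}{L(K+1)}$ reduces to $-\frac{\eta}{2}$. Note that the present paper does not reprove this lemma --- it is imported verbatim from the cited reference --- but your argument is the standard and correct route to it, and your closing remark about routing the $\norm{d_k}^2$ mass from the AM--GM into the descent term (rather than the history sum) is indeed the reason the step-size constraint is linear in $K$ here.
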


\begin{lem}\cite[Lemma 3.5]{dc}\label{prop2}
Suppose that Assumptions \ref{asmp:lips}-\ref{asmp:bdd_delay} hold. Then, for any $0<\eta\leq\frac{1}{L(K+1)}$, the PIAG algorithm yields the following guarantee
\begin{equation}
  -\ndk^2 \leq - \frac{\mu}{4} F_{k+1} + \eta L \sj \ndj^2. \nn
\end{equation}
\end{lem}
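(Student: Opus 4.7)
My plan is to reduce the inequality to a strong-convexity bound on a particular subgradient of $F$ at $x_{k+1}$, then control the associated gradient-staleness error using the Lipschitz and bounded-delay assumptions. Concretely, by $\mu$-strong convexity of $F$ (Assumption \ref{asmp:conv}), for any $s\in\partial F(x_{k+1})$ we have $F(x^*)\geq F(x_{k+1})+\langle s,x^*-x_{k+1}\rangle+\tfrac{\mu}{2}\norm{x^*-x_{k+1}}^2$; combined with Young's inequality this yields the standard estimate $F_{k+1}\leq\tfrac{1}{2\mu}\norm{s}^2$. I would apply this to the specific subgradient produced by the algorithm: the proximal optimality condition recalled in the excerpt states that $h_{k+1}=-g_k-d_k\in\partial r(x_{k+1})$, so $s_{k+1}\triangleq\nabla f(x_{k+1})+h_{k+1}=-d_k+e_k$ lies in $\partial F(x_{k+1})$, where $e_k\triangleq\nabla f(x_{k+1})-g_k$ measures the staleness of the aggregated gradient relative to the true gradient at $x_{k+1}$.

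The central step is then to bound $\norm{e_k}$. Writing $e_k=\tfrac{1}{m}\sum_i\bigl(\nabla f_i(x_{k+1})-\nabla f_i(x_{\tau_{i,k}})\bigr)$, Assumption \ref{asmp:lips} gives $\norm{e_k}\leq\tfrac{1}{m}\sum_i L_i\norm{x_{k+1}-x_{\tau_{i,k}}}$. Since $x_{k+1}-x_{\tau_{i,k}}=\eta\sum_{j=\tau_{i,k}}^{k} d_j$ and $\tau_{i,k}\geq k-K$ by Assumption \ref{asmp:bdd_delay}, telescoping and the triangle inequality lead to $\norm{e_k}\leq\eta L\sum_{j=(k-K)_+}^{k}\norm{d_j}$. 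Applying Cauchy--Schwarz to this sum of at most $K+1$ nonnegative terms gives $\norm{e_k}^2\leq\eta^2 L^2(K+1)\bigl(\norm{d_k}^2+\sj\norm{d_j}^2\bigr)$, and invoking the step-size bound $\eta\leq 1/(L(K+1))$ contracts the prefactor to at most $\eta L$.

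Plugging $s_{k+1}=-d_k+e_k$ back into $F_{k+1}\leq\tfrac{1}{2\mu}\norm{s_{k+1}}^2$ and using the elementary inequality $\norm{e_k-d_k}^2\leq 2\norm{e_k}^2+2\norm{d_k}^2$ together with the bound from the preceding paragraph and $\eta L\leq 1$ produces an inequality of the form $\mu F_{k+1}\leq 2\norm{d_k}^2+\eta L\sj\norm{d_j}^2$, which implies the claimed $\tfrac{\mu}{4}F_{k+1}\leq\norm{d_k}^2+\eta L\sj\norm{d_j}^2$ with slack. The main obstacle is the bookkeeping in the second paragraph: the varying lags $\tau_{i,k}$ and the different component Lipschitz constants $L_i$ force me to pass to a common summation window $[(k-K)_+,k]$ and to pay a Cauchy--Schwarz factor of $K+1$, which is precisely what the step-size condition $\eta L(K+1)\leq 1$ is tailored to absorb so that the resulting coefficient is $\eta L$ rather than $\eta^2 L^2(K+1)$.
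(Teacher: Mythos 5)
The paper states this lemma with a citation to \cite{dc} and does not reproduce the proof, so there is no in-paper argument to compare against; I can only assess your proposal on its own terms, and on those terms it is correct. The reduction to $F_{k+1}\leq\frac{1}{2\mu}\norm{s}^2$ for $s\in\partial F(x_{k+1})$ via strong convexity and Young's inequality is valid (note $F=f+r$ inherits $\mu$-strong convexity from $f$), the identification $s_{k+1}=\nabla f(x_{k+1})+h_{k+1}=-d_k+e_k\in\partial F(x_{k+1})$ follows from the proximal optimality condition stated in the paper, and the staleness bound $\norm{e_k}\leq\eta L\sum_{j=(k-K)_+}^{k}\norm{d_j}$ is a clean consequence of telescoping, Lipschitz gradients, and the bounded-delay assumption. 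The Cauchy--Schwarz step pays a factor of at most $K+1$, which the step-size restriction $\eta L(K+1)\leq 1$ then absorbs to give $\norm{e_k}^2\leq\eta L\bigl(\norm{d_k}^2+\sj\norm{d_j}^2\bigr)$, and chaining this through $\norm{-d_k+e_k}^2\leq 2\norm{d_k}^2+2\norm{e_k}^2$ with $\eta L\leq 1$ yields $\frac{\mu}{4}F_{k+1}\leq\frac{1}{2}\norm{d_k}^2+\frac{\eta L}{4}\sj\norm{d_j}^2$, which is stronger than what the lemma asserts. This subgradient-norm route is the natural one for a result of this shape, and the slack in your constants gives a useful sanity check that the bookkeeping went through.
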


Before presenting the main result of this work, we introduce the following lemma, which was presented in \cite{arda}, in a slightly different form. This lemma shows linear convergence rate for a nonnegative sequence $Z_k$ that satisfies a contraction relation perturbed by shocks (represented by $Y_k$ in the lemma).

\begin{lem}\cite[Lemma 1]{arda}\label{mlsp}
Let $\{Z_k\}$ and $\{Y_k\}$ be a sequence of non-negative real numbers satisfying
\begin{equation}
  \alpha Z_{k+1} \leq Z_k - \beta \, Y_k + \gamma \sum_{j=k-A}^k Y_j, \label{eq:rec}
\end{equation}
for any $k\geq0$ for some constants $\alpha>1$, $\beta\geq0$, $\gamma\geq0$ and $A\in\Integer^+$. If
\begin{equation}
  \gamma (\alpha^{A+1}-1) \leq \beta (\alpha-1) \label{eq:asmp}
\end{equation}
holds, then $Z_k \leq \alpha^{-k} Z_0$, for all $k\geq0$.
\end{lem}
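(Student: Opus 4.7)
The plan is to establish $\alpha^k Z_k \leq Z_0$ for every $k$ by multiplying the recurrence \eqref{eq:rec} by the weight $\alpha^k$, summing over $k$, and showing that all $Y_j$ terms cancel thanks to the stability condition \eqref{eq:asmp}. I will adopt the convention that $Y_j = 0$ for $j < 0$ (or equivalently treat the lower index of the backward sum as $\max(0, k-A)$), so that the perturbation sum is well-defined at the beginning of the sequence.

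First I would fix an arbitrary horizon $N \geq 1$, multiply \eqref{eq:rec} by $\alpha^k$, and sum from $k=0$ to $k=N-1$. The left-hand side telescopes neatly: $\sum_{k=0}^{N-1} (\alpha^{k+1} Z_{k+1} - \alpha^k Z_k) = \alpha^N Z_N - Z_0$, yielding
\begin{equation}
\alpha^N Z_N \leq Z_0 - \beta \sum_{k=0}^{N-1} \alpha^k Y_k + \gamma \sum_{k=0}^{N-1} \alpha^k \sum_{j=k-A}^{k} Y_j. \nn
\end{equation}
The key manipulation is to interchange the order of summation in the double sum. A given $Y_j$ with $j \geq 0$ appears with weight $\alpha^k$ for all indices $k$ satisfying $j \leq k \leq \min(N-1, j+A)$, so its total coefficient is bounded above by $\sum_{k=j}^{j+A} \alpha^k = \alpha^j \frac{\alpha^{A+1}-1}{\alpha-1}$.

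Next, I would invoke the hypothesis \eqref{eq:asmp}, which rearranges to $\gamma \cdot \frac{\alpha^{A+1}-1}{\alpha-1} \leq \beta$. This gives $\gamma \alpha^j \frac{\alpha^{A+1}-1}{\alpha-1} \leq \beta \alpha^j$ for every $j \geq 0$. Substituting this bound into the rearranged double sum shows that the perturbation term is dominated term-by-term by $\beta \sum_{j=0}^{N-1} \alpha^j Y_j$, which exactly matches the negative contribution already present. Consequently $\alpha^N Z_N \leq Z_0$, and since $N$ was arbitrary, the conclusion $Z_k \leq \alpha^{-k} Z_0$ follows.

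The main (minor) obstacle is bookkeeping at the boundary: one must be careful that when $k < A$ the range $j = k-A, \ldots, k$ includes negative indices, and that when $j$ is close to $N-1$ the upper limit of the inner sum is truncated at $N-1$ rather than $j+A$. Both issues are benign: the negative indices contribute zero under our convention, while the truncation only makes the coefficient of $Y_j$ smaller, so the bound $\beta \alpha^j$ still applies. With these observations in place, the geometric-sum computation together with \eqref{eq:asmp} delivers the cancellation in a single line.
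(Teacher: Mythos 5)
Your proof is correct and complete, and (for the record) the paper itself does not prove this lemma --- it is stated and cited from \cite{arda} with a pointer and no in-text argument, so there is no in-paper proof to compare against.

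The telescoping argument you give is clean: multiplying \eqref{eq:rec} by $\alpha^k$ and summing over $k=0,\dots,N-1$ collapses the $Z$-terms to $\alpha^N Z_N - Z_0$, and after interchanging the order of summation the coefficient of each $Y_j$ (for $0\le j\le N-1$) in the perturbation term is $\gamma\sum_{k=j}^{\min(N-1,j+A)}\alpha^k \le \gamma\,\alpha^j\,\frac{\alpha^{A+1}-1}{\alpha-1}$, which by \eqref{eq:asmp} is at most $\beta\alpha^j$. Since $Y_j\ge 0$, each such coefficient is dominated by the matching $-\beta\alpha^j Y_j$ term and the whole $Y$-contribution is nonpositive, giving $\alpha^N Z_N\le Z_0$. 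You have also correctly identified and dispatched the two boundary issues: indices $j<0$ contribute nothing under the convention $Y_j=0$ (equivalently $(k-A)_+$), and truncating the inner sum at $N-1$ only shrinks the coefficient, so the bound still applies. One could have tried a direct induction on $k$, but as your argument makes implicit, induction alone does not close because the $Y_j$-shocks carry no pointwise control --- the weighted summation is the step that makes them cancel, and this is the standard route for perturbed linear recursions of this form.
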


%\begin{proof}
%Multiplying both sides of \eqref{eq:rec} by $\alpha^k$ and summing from $k=0$ to $p$, we get
%\begin{align}
%  \sum_{k=0}^p \alpha^{k+1} Z_{k+1} & \leq \sum_{k=0}^p \left( \alpha^k Z_k - \alpha^k \beta \, Y_k + \alpha^k \gamma \sum_{j=k-A}^k Y_j \right) \nn\\
%    & = \sum_{k=0}^p \alpha^k Z_k - \beta \sum_{k=0}^p \alpha^k \, Y_k + \gamma \sum_{k=0}^p \sum_{j=k-A}^k \alpha^k \, Y_j \nn\\
%    & \leq \sum_{k=0}^p \alpha^k Z_k - \beta \sum_{k=0}^p \alpha^k \, Y_k + \gamma \sum_{k=0}^p \left( \sum_{j=k}^{k+A} \alpha^j \right) Y_k \nn\\
%    & = \sum_{k=0}^p \alpha^k Z_k - \beta \sum_{k=0}^p \alpha^k \, Y_k + \gamma \sum_{k=0}^p \left( \alpha^k \frac{\alpha^{A+1}-1}{\alpha-1} \right) Y_k \nn\\
%    & = \sum_{k=0}^K \alpha^k Z_k - \sum_{k=0}^K \alpha^k \left( \beta - \gamma \frac{\alpha^{A+1}-1}{\alpha-1} \right) Y_k \nn\\
%    & \leq \sum_{k=0}^p \alpha^k Z_k,
%\end{align}
%where the last line follows by \eqref{eq:asmp}. Telescoping the terms in both sides, we get $\alpha^{p+1} Z_{p+1} \leq Z_0$, which concludes the proof.
%\end{proof}

We next present the main theorem of this paper, which characterizes the linear convergence rate of the PIAG algorithm. In particular, we show that when the step size is sufficiently small, the PIAG algorithm is linearly convergent with a contraction rate that depends on the step size $\eta$ and the strong convexity constant $\mu$.

\begin{thm}\label{thm}
Suppose that Assumptions \ref{asmp:lips}-\ref{asmp:bdd_delay} hold. Then, the PIAG algorithm with step size $0<\eta\leq\frac{16}{\mu} \left[ \left( 1+\frac{1}{48Q} \right)^{\frac{1}{K+1}} - 1 \right]$ is linearly convergent satisfying
\begin{equation}
  F_k \leq \left( 1 + \eta \frac{\mu}{16} \right)^{-k} F_0, \label{eq:thm}
\end{equation}
for any $k\geq0$. Furthermore, if $\eta = \frac{16}{\mu} \left[ \left( 1+\frac{1}{48Q} \right)^{\frac{1}{K+1}} - 1 \right]$, then
	\begin{equation} F_k  \leq \left( 1-\frac{1}{49Q(K+1)} \right)^k F_0.
	\label{eq:thm-rate-cond-number}
	\end{equation}
\end{thm}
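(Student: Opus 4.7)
The plan is to combine Lemmas \ref{prop1} and \ref{prop2} into a single recursion on $F_k$ that matches the template of Lemma \ref{mlsp}, and then to invoke that lemma.

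Concretely, I multiply the inequality of Lemma \ref{prop2} by $\eta/4$ and add it to the inequality of Lemma \ref{prop1}. This transfers a $(\eta\mu/16)F_{k+1}$ term to the left, reduces the coefficient of $-\ndk^2$ on the right from $\eta/2$ to $\eta/4$, and inflates the coefficient of $\sj\ndj^2$ to $3\eta^2L/4$, giving
\begin{equation*}
  \Bigl(1 + \tfrac{\eta\mu}{16}\Bigr) F_{k+1} \;\leq\; F_k - \tfrac{\eta}{4}\ndk^2 + \tfrac{3\eta^2 L}{4}\sj\ndj^2.
\end{equation*}
Rewriting the sum to run up to $j=k$ (absorbing the extra $\ndk^2$ into the linear coefficient) puts this into the form required by Lemma \ref{mlsp} with $Z_k = F_k$, $Y_k = \ndk^2$, $\alpha = 1 + \eta\mu/16$, $\beta = \eta/4 + 3\eta^2L/4$, $\gamma = 3\eta^2L/4$, and $A = K$.

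Next, I verify condition \eqref{eq:asmp}. After simplification using $Q = L/\mu$ it reduces to
\begin{equation*}
  (1 + \eta\mu/16)^{K+1} \;\leq\; 1 + \tfrac{1 + 3\eta L}{48 Q},
\end{equation*}
for which the stronger bound $(1 + \eta\mu/16)^{K+1} \leq 1 + 1/(48Q)$ suffices. This is exactly the step-size restriction stated in the theorem; it also implies $\eta \leq 1/(L(K+1))$ required by Lemmas \ref{prop1} and \ref{prop2} (via $(1+x)^{r} - 1 \leq rx$ for $r \in [0,1]$). Lemma \ref{mlsp} then yields \eqref{eq:thm}.

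Finally, to obtain \eqref{eq:thm-rate-cond-number}, I substitute the boundary step size, giving $1 + \eta\mu/16 = (1 + 1/(48Q))^{1/(K+1)}$, so the desired rate follows from
\begin{equation*}
  (1 + 1/(48Q))^{1/(K+1)} - 1 \;\geq\; 1/(49Q(K+1) - 1),
\end{equation*}
which I would establish using $(1+a)^{1/n} - 1 \geq \log(1+a)/n$ together with a quadratic lower bound on $\log(1+a)$. The main obstacle is this last step: the inequality is tight at $Q = 1$, $K = 0$ (both sides equal $1/48$), so constants must be tracked carefully rather than relying on loose asymptotic bounds --- the margin between the constants $48$ and $49$ is precisely what absorbs the second-order term. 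The combination of the three lemmas is otherwise straightforward algebra.
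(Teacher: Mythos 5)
Your derivation of \eqref{eq:apply} and its reduction to Lemma~\ref{mlsp} is essentially the paper's argument. The paper simply enlarges $\sj$ to $\sum_{j=k-K}^{k}$ (the added $\ndk^2\geq 0$ only helps) and then takes $\beta=\eta/4$, $\gamma=3\eta^2L/4$; you instead fold the extra $\ndk^2$ into $\beta=\eta/4+3\eta^2L/4$. Both lead to the same verification of \eqref{eq:asmp}, since you immediately drop the $3\eta L$ surplus. The Bernoulli step showing the step-size restriction implies $\eta\leq 1/(L(K+1))$ is the same.

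The gap is in your route to \eqref{eq:thm-rate-cond-number}. You want $(1+a)^{1/n}-1\geq 1/(49Q(K+1)-1)$ with $a=1/(48Q)$, $n=K+1$, and you propose to start from $(1+a)^{1/n}-1\geq\log(1+a)/n$. That first inequality is already too lossy: at $Q=1$, $K=0$ the target is $1/48$, exactly equal to $(1+a)^{1/n}-1=a=1/48$, but your intermediate quantity is $\log(49/48)<1/48$. No lower bound on $\log(1+a)$ (quadratic or otherwise) can recover what the log step has already given away, so the chain cannot close at the tight corner you yourself identified. The paper sidesteps logarithms entirely: substituting the boundary step size gives $F_k\leq(1+\tfrac{1}{48Q})^{-k/(K+1)}F_0$; then $(1+\tfrac{1}{48Q})^{-1}=1-\tfrac{1}{48Q+1}\leq 1-\tfrac{1}{49Q}$ because $Q\geq 1$, and finally $(1-u)^{1/n}\leq 1-u/n$ (the reverse Bernoulli, valid for $u\in[0,1]$, $n\geq 1$) with $u=1/(49Q)$, $n=K+1$. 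Both of these steps hold with equality in the limiting regime, so no slack is wasted. Replace your log-based bound with this two-step algebraic argument and the proof closes.
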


\begin{proof}
By Lemma \ref{prop2}, we have
\begin{equation}
  -\frac{1}{4} \eta \ndk^2 \leq - \eta \frac{\mu}{16} F_{k+1} + \eta^2 \frac{L}{4} \sj \ndj^2. \nn
\end{equation}
Using this inequality in \eqref{eq:prop1} of Lemma \ref{prop1}, we get
\begin{equation}
  \left( 1 + \eta \frac{\mu}{16} \right) F_{k+1} \leq F_k - \frac{1}{4} \eta \norm{d_k}^2 + \eta^2 \frac{3L}{4} \sj\ndj^2. \label{eq:apply}
\end{equation}
We want to apply Lemma \ref{mlsp} to the inequality \eqref{eq:apply} with $Z_k=F_k$ and $Y_k=\ndk^2$ for proving \eqref{eq:thm}. For this purpose, we need $0<\eta\leq\frac{1}{L(K+1)}$ in order for Lemma \ref{prop1} and Lemma \ref{prop2} to hold, and
\begin{equation}
  \eta^2 \frac{3L}{4} \left( \left( 1 + \eta \frac{\mu}{16} \right)^{K+1}-1 \right) \leq \frac{1}{4} \eta \left( \left( 1 + \eta \frac{\mu}{16} \right)-1 \right) \label{eq:mlsp_req}
\end{equation}
with $\eta>0$ for Lemma \ref{mlsp} to hold. Simplifying and rearranging terms in \eqref{eq:mlsp_req}, we obtain
\begin{equation}
  \left( 1 + \eta \frac{\mu}{16} \right)^{K+1}-1 \leq \frac{1}{48Q}. \nn
\end{equation}
Therefore, for any step size satisfying
\begin{equation}
  0 < \eta \leq \frac{16}{\mu} \left[ \left( 1+\frac{1}{48Q} \right)^{\frac{1}{K+1}} - 1 \right], \label{eq:mlsp_req2}
\end{equation}
Lemma \ref{mlsp} holds. We can also observe that the right-hand side of \eqref{eq:mlsp_req2} can be upper bounded using the Bernoulli inequality, i.e., $(1+x)^r\leq1+rx$ for any $x\geq-1$ and $r\in[0,1]$, as follows
\begin{align}
  \eta & \leq \frac{16}{\mu} \left( 1+\frac{1}{48Q(K+1)}-1 \right) \nn\\
    & = \frac{1}{3L(K+1)}.
\end{align}
Thus, the constraint \eqref{eq:mlsp_req2} satisfies the constraint $0<\eta\leq\frac{1}{L(K+1)}$ in Lemma \ref{prop1} and Lemma \ref{prop2} as well. Then, applying Lemma \ref{mlsp} to \eqref{eq:apply} yields \eqref{eq:thm}.

It remains to show \eqref{eq:thm-rate-cond-number}. Plugging in $\eta=\frac{16}{\mu} \left[ \left( 1+\frac{1}{48Q} \right)^{\frac{1}{K+1}} - 1 \right]$ in \eqref{eq:thm} of Theorem \ref{thm}, we have
\begin{align}
  F_k & \leq \left( 1 + \frac{16}{\mu} \left[ \left( 1+\frac{1}{48Q} \right)^{\frac{1}{K+1}} - 1 \right] \frac{\mu}{16} \right)^{-k} F_0 \nn\\
    & = \left( 1+\frac{1}{48Q} \right)^{\frac{-k}{K+1}} F_0 \nn\\
    & \leq \left( 1-\frac{1}{49Q} \right)^{\frac{k}{K+1}} F_0 \nn\\
    & \leq \left( 1-\frac{1}{49Q(K+1)} \right)^k F_0,
\end{align}
where the third line follows as $Q\geq1$. This implies \eqref{eq:thm-rate-cond-number} and completes the proof.
\end{proof}

We next introduce the following corollary, which highlights the main result of the paper. This corollary indicates that for an appropriately chosen step size, the PIAG algorithm is guaranteed to return an $\epsilon$-optimal solution after $\order(QK\log(1/\epsilon))$ iterations.

\begin{corr}\label{corr}
Suppose that Assumptions \ref{asmp:lips}-\ref{asmp:bdd_delay} hold. Then, the PIAG algorithm with step size $\eta = \frac{16}{\mu} \left[ \left( 1+\frac{1}{48Q} \right)^{\frac{1}{K+1}} - 1 \right]$ is guaranteed to return an $\epsilon$-optimal solution after at most $49Q(K+1)\log(F_0/\epsilon)$ iterations.
\end{corr}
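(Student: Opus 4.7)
The plan is to derive the iteration complexity bound directly from the linear convergence rate \eqref{eq:thm-rate-cond-number} established in Theorem~\ref{thm}. Since the chosen step size $\eta = \frac{16}{\mu}\bigl[(1+\frac{1}{48Q})^{1/(K+1)}-1\bigr]$ is exactly the one for which \eqref{eq:thm-rate-cond-number} was proved, we may start from
\[
F_k \leq \left(1-\frac{1}{49Q(K+1)}\right)^k F_0
\]
without any additional work.

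Next, I would set $F_k \leq \epsilon$ as the termination criterion and solve for the smallest $k$ that guarantees it. Taking logarithms, this reduces to
\[
k \log\!\left(1-\frac{1}{49Q(K+1)}\right) \leq -\log(F_0/\epsilon).
\]
Here I would invoke the elementary inequality $\log(1-x) \leq -x$ valid for all $x \in [0,1)$, applied with $x=\frac{1}{49Q(K+1)} \in (0,1)$ (which holds since $Q \geq 1$ and $K \geq 0$). This gives the sufficient condition
\[
-\frac{k}{49Q(K+1)} \leq -\log(F_0/\epsilon),
\]
so that any $k \geq 49Q(K+1)\log(F_0/\epsilon)$ yields $F_k \leq \epsilon$.

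There is no genuine obstacle in this argument — essentially all of the work has already been folded into Theorem~\ref{thm}, and the corollary merely unpacks the contraction rate into an iteration count. The only mild point to check is that the step size is nonnegative and well-defined (it is, since $(1+\frac{1}{48Q})^{1/(K+1)}>1$), and that the logarithmic inequality $\log(1-x)\leq -x$ is applied in its range of validity. The final bound $49Q(K+1)\log(F_0/\epsilon)$ then follows immediately.
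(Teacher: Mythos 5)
Your proposal is correct and follows essentially the same route as the paper: invoke \eqref{eq:thm-rate-cond-number} from Theorem~\ref{thm}, take logarithms, and apply the elementary inequality $\log(1+x)\leq x$ (equivalently, $\log(1-x)\leq -x$) to extract the iteration bound $49Q(K+1)\log(F_0/\epsilon)$.
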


\begin{proof}
%Putting the step size $\eta=\frac{16}{\mu} \left[ \left( 1+\frac{1}{48Q} \right)^{\frac{1}{K+1}} - 1 \right]$ in \eqref{eq:thm} of Theorem \ref{thm}, we have
%\begin{align}
%  F_k & \leq \left( 1 + \frac{16}{\mu} \left[ \left( 1+\frac{1}{48Q} \right)^{\frac{1}{K+1}} - 1 \right] \frac{\mu}{16} \right)^{-k} F_0 \nn\\
%    & = \left( 1+\frac{1}{48Q} \right)^{\frac{-k}{K+1}} F_0 \nn\\
%    & \leq \left( 1-\frac{1}{49Q} \right)^{\frac{k}{K+1}} F_0 \nn\\
%    & \leq \left( 1-\frac{1}{49Q(K+1)} \right)^k F_0,
%\end{align}
%where the third line follows as $Q\geq1$.
By Theorem \ref{thm}, the inequality \eqref{eq:thm-rate-cond-number} holds. Taking logarithm of both sides of this inequality yields
\begin{align}
  \log(F_k) & \leq \log(F_0) + k \log\left( 1-\frac{1}{49Q(K+1)} \right) \nn\\
    & \leq \log(F_0) - \frac{k}{49Q(K+1)}, \nn
\end{align}
where the last line follows since $\log(1+x)\leq x$ for any $x\geq-1$. Therefore, for any $k$ satisfying
\begin{equation}
  \log(F_0)- \frac{k}{49Q(K+1)} \leq \log(\epsilon), \label{eq:opt_soln}
\end{equation}
$x_k$ is an $\epsilon$-optimal solution. Rearranging terms in \eqref{eq:opt_soln}, we conclude that for any $k \geq 49Q(K+1)\log(F_0/\epsilon)$, $x_k$ is an $\epsilon$-optimal solution.
\end{proof}

\bibliographystyle{plain}
\bibliography{piag_ref}

\end{document}